\numberwithin{equation}{section}
\newcommand{\I}{\mathrm{i}}
\newcommand{\wh}{\widehat}
\newcommand{\lb}{\left(}
\newcommand{\rb}{\right)}
\newcommand{\PD}{\partial}
\newcommand{\Beq}{\begin{equation}}
    \newcommand{\Eeq}{\end{equation}}
\newcommand{\beq}{\begin{equation*}}
    \newcommand{\eeq}{\end{equation*}}
\newcommand{\bal}{\begin{align}}
    \newcommand{\eal}{\end{align}}
\renewcommand{\L}{\langle}
\newcommand{\bp}{\begin{prob}}
    \newcommand{\ep}{\end{prob}}
\newcommand{\bpr}{\begin{proof}}
    \newcommand{\epr}{\end{proof}}
\newcommand{\bel}[1]{\begin{equation}\label{#1}}
    \newcommand{\ee}{\end{equation}}
\newtheorem{theorem}{Theorem}[section]
\newtheorem{lemma}[theorem]{Lemma}
\theoremstyle{definition}
\newtheorem{definition}[theorem]{Definition}
\newcommand{\Rn}{\mathbb{R}^n}
\newcommand{\R}{\rangle}
\newcommand{\D}{\mathrm{d}}
\newcommand{\Rb}{\mathbb{R}}
\newcommand{\vp}{\varphi}
\newcommand{\wt}{\widetilde}
\newcommand{\Sc}{\mathcal{S}}
\newcommand{\Sb}{\mathbb{S}}
\renewcommand{\L}{\langle}
\title[Range characterization in 2D]{Range characterization of the ray transform on Sobolev spaces of symmetric tensor fields in two dimensions}
\author[Agrawal, Krishnan and Sharafutdinov]{Divyansh Agrawal$^\ast$, Venkateswaran P. Krishnan$^\ast$ and Vladimir A. Sharafutdinov$^\dagger$}
\address {$^\ast$~Centre for Applicable Mathematics, Tata Institute of Fundamental Research, India
\newline
E-mail:{\tt\  agrawald@tifrbng.res.in, vkrishnan@tifrbng.res.in}}
\thanks{The first author was supported by SERB Overseas Visiting Doctoral Fellowship.}
\thanks{The second author acknowledges the support of the Department of Atomic Energy,  Government of India, under
Project No.\ 12-R\&D-TFR-5.01-0520. }
\address{$^\dagger$ Sobolev Institute of Mathematics, 4 Koptyug Av., 630090, Novosibirsk, Russia
\newline
E-mail:{\tt\ sharaf@math.nsc.ru}}
\thanks{The work of the third author was performed according to the Government research assignment
for IM SB RAS, project FWNF-2026-0026.}
\date{\today}
\begin{document}
\begin{abstract}
The ray transform $I_m$ integrates a symmetric $m$ rank tensor field
$f$ on $\Rb^n$ over lines. In the case of $n\ge3$, the range
characterization of the operator  $I_m$ on weighted
Sobolev spaces $H^{s}_t({{\mathbb R}}^n;S^m{{\mathbb R}}^n)$ was
obtained in [V. Krishnan and V. Sharafutdinov. Range
characterization of ray transform on Sobolev spaces of symmetric
tensor fields. Inverse Problems and Imaging, 18(6), 1272--1293,
2024]. Here we obtain a range characterization  result in higher order weighted Sobolev spaces in two dimensions. Range characterization in the case of $n=2$ is very different  from that for
$n\ge3$, and this allows us to obtain such a result in higher order weighted Sobolev spaces $H^{r,s}_t(\Rb^2)$ for any real $r$. Nevertheless, our main tool is again  the Reshetnyak
formula stating that $\lVert
I_mf\rVert_{H^{(r,s+1/2)}_{t+1/2}(T{{\mathbb S}}^{n-1})}=\lVert
f\rVert_{H^{(r,s)}_t({{\mathbb R}}^n;S^m{{\mathbb R}}^n)}$ for a
solenoidal tensor field $f$.

\end{abstract}

\subjclass[2020]{Primary: 44A12, 65R32. Secondary: 46F12.}
\keywords{Ray transform, Reshetnyak formula, range characterization, inverse problems, tensor analysis.}

\maketitle
\section{Introduction}

The ray transform $I_m$ on the Euclidean space integrates rank $m$
symmetric tensor fields (integrates functions in the case of $m=0$)
over lines. This transform arises in several applications such as
computerized tomography ($m=0$), Doppler tomography ($m=1$), travel
time tomography ($m=2$ and $m=4$) and polarization tomography to
name a few. A closely related operator is the Radon transform that
integrates functions over hyperplanes. In two dimensions, the
operator $I_0$ coincides with the Radon transform up to notations.
These transforms are well-studied, see
\cite{Helgason:Book,Natterer:Book,Sharafutdinov:Book}. We are
interested in the range characterization of the ray transform $I_m$
on weighted Sobolev spaces in two dimensions. A special case of the corresponding
result in dimensions $n\geq 3$ was obtained in the recent work
\cite{KS2}, where the 2D case was posed as an open question. The
question is answered in the current paper.

In the range characterization of the Radon transform on the Schwartz
space ${\mathcal S}({\mathbb R}^n)$, the so called Gel'fand --
Helgason -- Ludwig (GHL) integral conditions play the main role.
These conditions disappear while passing from the Schwartz space to
$L^2({\mathbb R}^n)$ \cite{{GGV_Book},Helgason:Book} and to Sobolev
spaces \cite{Sh3}.

In the range characterization of the ray transform $I_m$ on the
Schwartz space ${\mathcal S}({\mathbb R}^n;S^m{\mathbb R}^n)$ in
dimensions $n\ge3$, the John differential equations play the main
role. In the case of $n=3$ and $m=0$ there is one second order John
equation discovered in the pioneering
work \cite{J} by F.~John. In the
case of $m=0$ and arbitrary $n\ge3$, there is a system of second
order John equations \cite{Helgason:Book}. For arbitrary $m\ge0$ and
$n\ge3$, there is a system of $2(m+1)$ order John's differential
equations \cite{Sharafutdinov:Book}.  John's equations survive while
passing from the Schwartz space to Sobolev spaces if the equations
are treated in the distribution sense \cite{KS2}.

The situation is very different for the ray transform $I_m$ in two
dimensions. There are some GHL type integral conditions in the range
characterization of $I_m$ on the Schwartz space in the 2D case
\cite{P}. Nevertheless, as is shown in the current work, neither
integral GHL conditions nor John's differential equations survive
while passing from the Schwartz space to Sobolev spaces in two
dimensions.

\section{Preliminaries and statements of the main results}

Let $S^{m}{\mathbb R}^{n}$ be the  ${n+m-1}\choose m$-dimensional
complex vector space of rank $m$ symmetric tensors on ${{\mathbb
R}}^n$. Let ${\mathcal S}({\mathbb R}^{n}; S^{m}{\mathbb
R}^{n})={\mathcal S}({\mathbb R}^{n})\otimes S^{m}{\mathbb R}^{n}$
denote the Schwartz space of $S^{m}{\mathbb R}^{n}$-valued functions
on ${\mathbb R}^n$ equipped with the standard topology. Elements of
${\mathcal S}({\mathbb R}^{n}; S^{m}{\mathbb R}^{n})$ are smooth
fast decaying rank $m$ symmetric tensor fields.

The family of oriented straight lines in ${\mathbb R}^n$ is
parameterized by points of the manifold
    \[
    T{\mathbb S}^{n-1}=\{(x,\xi)\in{\mathbb R}^n\times{\mathbb R}^n\mid |\xi|=1,\langle x,\xi\rangle=0\}\subset{\mathbb R}^n\times{\mathbb R}^n,
    \]
that is, the tangent bundle of the unit sphere ${\mathbb S}^{n-1}$.
A point $(x,\xi)\in T{\mathbb S}^{n-1}$ determines the line
$\{x+t\xi\mid t\in{\mathbb R}\}$.  The Schwartz space ${\mathcal
S}(T{\mathbb S}^{n-1})$ is defined as follows. Given a function
$\varphi\in C^\infty(T{\mathbb S}^{n-1})$, we extend it to some
neighborhood of $T{\mathbb S}^{n-1}$ in ${\mathbb R}^n\times{\mathbb
R}^n$ so that (the extension is again denoted by $\varphi$)
$$
\varphi(x,r\xi)=\varphi(x,\xi)\ (r>0),\quad
\varphi(x+r\xi,\xi)=\varphi(x,\xi)\ (r\in{\mathbb R}).
$$
We say that a function $\varphi\in C^\infty(T{\mathbb S}^{n-1})$
belongs to ${\mathcal S}(T{\mathbb S}^{n-1})$ if the seminorm
$$
\|\varphi\|_{k,\alpha,\beta}=\sup\limits_{(x,\xi)\in T{{\mathbb
S}}^{n-1}}
\left|(1+|x|)^k\partial^\alpha_x\partial^\beta_\xi\varphi(x,\xi)\right|
$$
is finite for every $k\in{\mathbb N}$ and for all multi-indices
$\alpha$ and $\beta$. The family of these seminorms defines the
topology on ${\mathcal S}(T{\mathbb S}^{n-1})$.

The {\it ray transform} $I_m$ is defined for $f=(f_{i_1\dots
i_m})\in{\mathcal S}({\mathbb R}^{n}; S^{m}{\mathbb R}^{n})$ by
\begin{equation}
I_mf (x,\xi)=\int\limits_{-\infty}^\infty f_{i_1\dots i_m}(x+t\xi)\,\xi^{i_1}\dots\xi^{i_m}\,\D  t=\int\limits_{-\infty}^\infty \L f(x+t\xi),\xi^m\R\,\D  t\quad\big((x,\xi)\in T{{\mathbb S}}^{n-1}\big).
                                         \label{1.2}
\end{equation}
Here and henceforth, we use the Einstein summation rule: the summation from 1 to $n$ is assumed over every index repeated in lower and upper positions in a monomial. We use either lower or upper indices for denoting coordinates of vectors and tensors.  Since we work in Cartesian coordinates only, there is no difference between covariant and contravariant tensors.

In the case of an even $m$, the ray transform is the linear continuous operator
$$
I_m:{\mathcal S}({{\mathbb R}}^n;S^m{{\mathbb R}}^n)\rightarrow {\mathcal S}_e(T{\mathbb S}^{n-1}),
$$
and in the case of an odd $m$,
$$
I_m:{\mathcal S}({{\mathbb R}}^n;S^m{{\mathbb R}}^n)\rightarrow {\mathcal S}_o(T{\mathbb S}^{n-1}),
$$
where ${\mathcal S}_e(TS^{n-1})\ ({\mathcal S}_o(TS^{n-1}))$ is the subspace of ${\mathcal S}(TS^{n-1})$
consisting of functions satisfying $\varphi(x,-\xi)=\varphi(x,\xi)$ (satisfying $\varphi(x,-\xi)=-\varphi(x,\xi)$).
To unify these formulas, let us introduce the {\it parity} of $m$
$$
\pi(m)=\left\{\begin{array}{l}e\ \mbox{if}\ m\ \mbox{is even},\\ o\ \mbox{if}\ m\ \mbox{is odd}.\end{array}\right.
$$
Then the ray transform can be initially considered as a linear continuous operator
\begin{equation}
I_m:{\mathcal S}({{\mathbb R}}^n;S^m{{\mathbb R}}^n)\rightarrow {\mathcal S}_{\pi(m)}(T{\mathbb S}^{n-1}).
                          \label{2.2}
\end{equation}

The following theorem is due to Pantjukhina \cite{P}:
\begin{theorem} \label{Th1.2}
Let $n\ge2$ and $m\ge0$. If a function $\varphi\in{\mathcal
S}_{\pi(m)}(T{\mathbb S}^{n-1})$ belongs to the range of the
operator \eqref{2.2}, then for every integer $r\ge0$, there exist
homogeneous polynomials $P^r_{i_1\dots i_m}(x)$ of degree $r$ on
${{\mathbb R}}^n$ such that
\begin{equation}
\int\limits_{\xi^{\perp}}\varphi(x',\xi)\L
x,x'\R^r\,\D x'=P^r_{i_1\dots i_m}(x)\xi^{i_1}\dots\xi^{i_m}\quad
\big((x,\xi)\in T{\mathbb S}^{n-1}\big),
                          \label{2.2a}
\end{equation}
where $dx'$ is the $(n-1)$-dimensional Lebesgue measure on the
hyperplane $\xi^\perp=\{x'\in{{\mathbb R}}^n\mid\L\xi,x'\R=0\}$.

In the case of $n=2$, the converse statement is true: If a function
$\varphi\in{\mathcal S}_{\pi(m)}(T{\mathbb S}^1)$ satisfies
\eqref{2.2a} with some homogeneous polynomials $P^r_{i_1\dots
i_m}(x)$ of degree $r$, then there exists a tensor field
$f\in{\mathcal S}({{\mathbb R}}^2;S^m{{\mathbb R}}^2)$ such that
$\varphi=I_mf$.
\end{theorem}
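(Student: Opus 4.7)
The plan is to handle the two implications separately: the necessity (valid in any dimension $n\ge 2$) by a direct Fubini manipulation, and the sufficiency in $n=2$ by an explicit Fourier-slice construction whose central difficulty is smoothness at the origin in the Fourier domain.

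\textbf{Necessity.} For $\varphi=I_mf$ and $(x,\xi)\in T\Sb^{n-1}$, applying Fubini gives
\[
\int_{\xi^\perp}\varphi(x',\xi)\L x,x'\R^r\,\D x'
=\int_{\xi^\perp}\!\int_{\Rb}f_{i_1\dots i_m}(x'+t\xi)\xi^{i_1}\cdots\xi^{i_m}\L x,x'\R^r\,\D t\,\D x'.
\]
Since $\L x,\xi\R=0$, I may replace $\L x,x'\R$ by $\L x,x'+t\xi\R$; the measure-preserving change of variable $y=x'+t\xi\colon\xi^\perp\times\Rb\to\Rb^n$ converts the integral into $\int_{\Rb^n}\L x,y\R^r f_{i_1\dots i_m}(y)\xi^{i_1}\cdots\xi^{i_m}\,\D y$, and a multinomial expansion of $\L x,y\R^r$ exhibits it in the required form $P^r_{i_1\dots i_m}(x)\xi^{i_1}\cdots\xi^{i_m}$, with coefficients given by tensor moments of $f$.

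\textbf{Sufficiency in 2D.} Set $\xi(\theta)=(\cos\theta,\sin\theta)$, $\xi^\perp(\theta)=(-\sin\theta,\cos\theta)$, $\bar\varphi(\theta,s)=\varphi(s\xi^\perp,\xi)$, and $\tilde\varphi(\theta,\sigma)=\int_\Rb\bar\varphi(\theta,s)e^{-\I\sigma s}\,\D s$. The same change of variable yields the Fourier slice identity
\begin{equation}\label{FSaux}
\tilde\varphi(\theta,\sigma)=\hat f_{i_1\dots i_m}(\sigma\xi^\perp(\theta))\xi^{i_1}(\theta)\cdots\xi^{i_m}(\theta),
\end{equation}
so it suffices to build $\hat f\in\mathcal{S}(\Rb^2;S^m\Cb^2)$ satisfying \eqref{FSaux} and take $f:=\mathcal F^{-1}\hat f$. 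Plugging $x=u\xi^\perp$, $x'=s\xi^\perp$ into \eqref{2.2a} and invoking the homogeneity of $P^r$ collapses the moment condition to
\[
M_r(\theta):=\int_\Rb\bar\varphi(\theta,s)s^r\,\D s=\L P^r(\xi^\perp(\theta)),\xi(\theta)^{\otimes m}\R,
\]
where $P^r$ is regarded as an $S^m\Rb^2$-valued homogeneous polynomial of degree $r$.

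I would construct $\hat f$ in two stages. First, prescribe the formal Taylor series of $\hat f$ at $\eta=0$ by the $S^m\Cb^2$-valued homogeneous polynomials $T_r(\eta):=(-\I)^r P^r(\eta)$; by the moment identity, the contraction of $T_r(\sigma\xi^\perp)$ against $\xi^{\otimes m}$ reproduces $\sigma^r\partial_\sigma^r\tilde\varphi(\theta,0)$, so the slice identity \eqref{FSaux} is matched to infinite order at $\sigma=0$. Borel's theorem furnishes $\hat f_0\in C_c^\infty(\Rb^2;S^m\Cb^2)$ realizing this formal series. Second, the error
\[
\psi(\theta,\sigma):=\tilde\varphi(\theta,\sigma)-\hat f_0(\sigma\xi^\perp)\cdot\xi^{\otimes m}
\]
is flat to infinite order at $\sigma=0$, Schwartz in $\sigma$, and inherits the parity $\psi(\theta+\pi,-\sigma)=(-1)^m\psi(\theta,\sigma)$ from $\pi(m)$. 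These properties make $\chi(\eta):=(-1)^m\psi(\theta,\sigma)/\sigma^m$ (with $\eta=\sigma\xi^\perp$) descend to a Schwartz function on $\Rb^2$, and
\[
\hat f(\eta):=\hat f_0(\eta)+\chi(\eta)(\eta^\perp)^{\otimes m}
\]
satisfies \eqref{FSaux} globally via $(\eta^\perp)^{\otimes m}\cdot\xi^{\otimes m}=(-\sigma)^m$. Real-valuedness of $f$ is finally enforced by averaging $\hat f$ with its origin-reflected complex conjugate, using the parity of $\varphi$.

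\textbf{Main obstacle.} The delicate point is the simultaneous matching of the formal Taylor data at the origin and the slice identity on all of $\Rb^2$. The GHL condition \eqref{2.2a} is exactly the compatibility needed for the prescribed Taylor coefficients $T_r$ to be well defined as $S^m\Cb^2$-valued polynomials in $\eta$, and the two-dimensional coincidence that $(\eta^\perp)^{\otimes m}$ spans the one-dimensional solenoidal direction at each $\eta$ enables the final correction step. Carefully bookkeeping the parity conditions and verifying Cartesian smoothness of $\chi$ at $\eta=0$ from its flatness in polar variables is the technical hinge of the argument.
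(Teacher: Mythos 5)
The paper does not actually prove Theorem \ref{Th1.2}: it is quoted as a known result of Pantjukhina \cite{P}, so there is no in-house proof to compare against. Judged on its own, your argument is essentially sound and reconstructs the classical strategy. The necessity half (Fubini, replace $\langle x,x'\rangle$ by $\langle x,x'+t\xi\rangle$ using $x\perp\xi$, change variables to $\mathbb{R}^n$, expand the moment) is exactly the standard proof and is correct. The 2D sufficiency half is also the right mechanism: the GHL conditions are precisely the statement that the $\sigma$-Taylor coefficients of $\widehat\varphi$ at the origin, namely $(-\mathrm{i})^r M_r(\theta)/r!$, have the algebraic form $\langle Q_r(\xi^\perp),\xi^{\otimes m}\rangle$ needed to be realized by a tensor field smooth at $\eta=0$ (via Borel), while away from the origin the one-dimensionality of $\{\eta\}^\perp$ in $\mathbb{R}^2$ lets you absorb the flat remainder into a multiple of $(\eta^\perp)^{\otimes m}$, since its contraction with $\xi^{\otimes m}$ is the nonvanishing factor $(-\sigma)^m$. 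The parity bookkeeping ($\psi(\theta+\pi,-\sigma)=(-1)^m\psi(\theta,\sigma)$, so $\psi/\sigma^m$ descends to a function of $\eta$) and the standard lemma that a polar-coordinate-smooth function flat at $\sigma=0$ is Cartesian-smooth are correctly identified as the technical hinges.

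Two small inaccuracies worth fixing: the prescribed Taylor data should be $\sum_r T_r(\eta)/r!$ rather than reproducing $\sigma^r\partial_\sigma^r\widehat\varphi(\theta,0)$ without the factorial, and the sentence claiming that \eqref{2.2a} is needed for the $T_r$ to be \emph{well defined} is misleading --- the $P^r$ are given by hypothesis; what \eqref{2.2a} buys is that the moments $M_r(\theta)$, which a priori are arbitrary trigonometric data, lie in the image of the restriction--contraction map applied to degree-$r$ homogeneous $S^m$-valued polynomials. Also track the $\sqrt{2\pi}$ normalization of \eqref{2.3} when concluding $I_mf=\varphi$, and note that the final realification step is only needed (and only works) if $\varphi$ is assumed real-valued; the paper's tensor spaces are complex, so it can be omitted. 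None of these affects the validity of the argument.
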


Our goal, as already mentioned, is to generalize Theorem \ref{Th1.2}
to weighted Sobolev spaces. We need a few notations to state the main
results.

We state the Fourier slice theorem.
    The Fourier transform of symmetric tensor fields
    $$
    F:{\mathcal S}({\mathbb R}^n; S^m{\mathbb R}^n)\to{\mathcal S}({\mathbb R}^n; S^m{\mathbb R}^n),\quad f\mapsto \wh f
    $$
    is defined component wise (hereafter $\I$ is the imaginary unit):
    \[
    \wh{f}_{i_1 \cdots i_{m}}(y)=\frac{1}{(2\pi)^{n/2}}\int \limits_{{\mathbb R}^n} e^{-\I \langle y,x\rangle} f_{i_1 \cdots i_m}(x) \, \D  x.
    \]
    The Fourier transform $F:{\mathcal S}(T{\mathbb S}^{n-1})\to{\mathcal S}(T{\mathbb S}^{n-1}),\ \vp\mapsto\wh\vp$ is defined as the $(n-1)$-dimensional Fourier transform over the subspace $\xi^{\perp}$:
    \[
    \wh{\vp}(y,\xi)=\frac{1}{(2\pi)^{(n-1)/2}}\int\limits_{\xi^{\perp}} e^{-\I \L y,x\rangle} \vp(x,\xi) \, \D  x\quad\big((y,\xi)\in T{{\mathbb S}}^{n-1}\big).
    \]
    The Fourier slice theorem \cite[formula (2.1.5)]{Sharafutdinov:Book} states:
    \Beq
    \wh{If}(y,\xi)=\sqrt{2\pi}\L \wh{f}(y), \xi^{m}\rangle \mbox{ for } (y,\xi)\in T{\mathbb S}^{n-1}.
                                  \label{2.3}
    \Eeq

We recall that ${\mathcal S}_{\mathrm{sol}}({{\mathbb
R}}^n;S^m{{\mathbb R}}^n)\ (m\ge1)$ is the subspace of ${\mathcal
S}({{\mathbb R}}^n;S^m{{\mathbb R}}^n)$ consisting of {\it
solenoidal} tensor fields satisfying
    \begin{equation}
        \sum\limits_{p=1}^n\frac{\partial f_{pi_2\dots i_m}}{\partial x^p}=0.
                                        \label{2.10}
    \end{equation}
For $m=0$ we set ${\mathcal S}_{\mathrm{sol}}({\mathbb R}^n)
={\mathcal S}({\mathbb R}^n)$.

For an integer $r\ge0$, real $s$ and $t>-(n-1)/2$, the Hilbert space
$H^{(r,s)}_t(T{{\mathbb S}}^{n-1})$ was introduced in
\cite[Definition 3.4]{HORF_Work}. Roughly speaking, the space
consists of functions $\varphi(x,\xi)$ on $T{{\mathbb S}}^{n-1}$
with quadratically integrable derivatives of order $\le r$ with
respect to $\xi$ and with quadratically integrable derivatives of
order $\le s$ with respect to $x$. For an integer $r\ge0$, real $s$
and $t>-n)/2$, the Hilbert space
$H^{(r,s)}_{t,\mathrm{sol}}({{\mathbb R}}^n;S^m{{\mathbb R}}^n)$ was
introduced in \cite[Definition 5.2]{HORF_Work}. The interpretation
of solenoidal tensor fields belonging to
$H^{(r,s)}_{t,\mathrm{sol}}({{\mathbb R}}^n;S^m{{\mathbb R}}^n)$ is
not so easy; nevertheless, these spaces inherit basic properties of
standard Sobolev spaces. In the next section, definitions of spaces
$H^{(r,s)}_t(T{{\mathbb S}}^{n-1})$ and
$H^{(r,s)}_{t,\mathrm{sol}}({{\mathbb R}}^n;S^m{{\mathbb R}}^n)$
will be reproduced with some simplifications in the 2D case that is
of our main interest in the current work. Moreover, these spaces
will be defined for any real $r$ in the 2D case.

    By \cite[Theorem 1.1]{HORF_Work}, for all $n\ge2$ and $m\ge0$, the ray transform
    $$
    I_m:{\mathcal S}_{\mathrm{sol}}({{\mathbb R}}^n;S^m{{\mathbb R}}^n)\rightarrow {\mathcal S}_{\pi(m)}(T{{\mathbb S}}^{n-1})
    $$
    extends to the isometric embedding of Hilbert spaces
    \begin{equation}
        I_m:H^{(r,s)}_{t,\mathrm{sol}}({{\mathbb R}}^n;S^m{{\mathbb R}}^n)\rightarrow H^{(r,s+1/2)}_{t+1/2,\pi(m)}(T{{\mathbb S}}^{n-1})
                                               \label{2.11}
    \end{equation}
    for every integer $r\ge0$, every real $s$ and every $t>-n/2$. See \eqref{2.2} for the additional index $\pi(m)$ on the right-hand side of
    \eqref{2.11}.
    One of the goals of this paper is to generalize this isometry result for any real $r$ in the 2D case.
Next, as a consequence of this isometry result, we prove a range
characterization theorem.

We are now ready to state main results of the paper. The spaces
appearing in the following statements are defined in the next
section.

\begin{theorem}[Reshetnyak formula] \label{Th2.1}
 For real $r,s,t > -1$ and for any $f \in
\Sc_{\mathrm{sol}}(\Rb^2; S^m\Rb^2)$, the following $r^{th}$-order
Reshetnyak formula holds:
\begin{equation}
\|f\|_{H^{r,s}_{t,\mathrm{sol}}(\Rb^2; S^m \Rb^2)} = \|I_m
f\|_{H^{r,s+1/2}_{t+1/2,\pi(m)}(T\Sb^1)}.
                   \label{2.12}
\end{equation}
\end{theorem}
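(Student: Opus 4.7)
The plan is to verify \eqref{2.12} by passing to the Fourier picture and decomposing both sides into angular Fourier modes on the circle. Since in 2D the sphere $\Sb^1$ is one-dimensional, the Sobolev index $r$ enters only as a scalar weight $(1+k^2)^r$ on each mode; this decouples the $r$-dependence from the rest of the computation and lets the identity be extended from integer $r\ge 0$ (known from \cite[Theorem~1.1]{HORF_Work}) to any real $r$. Concretely I would parameterize $T\Sb^1$ by $(s,\theta)\in\Rb\times[0,2\pi)$ via $\xi=\xi_\theta=(\cos\theta,\sin\theta)$, $\xip=(-\sin\theta,\cos\theta)$, $x=s\xip$, and expand any $\vp\in\Sc_{\pi(m)}(T\Sb^1)$ as $\vp(s\xip,\xi)=\sum_k\vp_k(s)e^{\I k\theta}$ (with $\vp_k=0$ unless $k\equiv m\pmod 2$). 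By the 2D definition (reviewed in the next section) the norm in $H^{(r,s+1/2)}_{t+1/2,\pi(m)}(T\Sb^1)$ takes the form $\sum_k(1+k^2)^r\|\vp_k\|_\star^2$, where $\|\cdot\|_\star$ is a $k$-independent weighted norm in $s$ depending on the indices $s+1/2$ and $t+1/2$. On the tensor side, the solenoidal constraint $y^p\wh f_{p i_2\dots i_m}(y)=0$, together with the fact that $\{T\in S^m\Rb^2:\omega^pT_{pi_2\dots i_m}=0\}$ is one-dimensional and spanned by $(\omega^\perp)^{\otimes m}$, yields $\wh f(\rho\omega)=g(\rho\omega)(\omega^\perp)^{\otimes m}$ for a single scalar $g$ on $\Rb^2$; expanding $g$ in the angular variable decomposes the left-hand norm of \eqref{2.12} analogously into $(1+k^2)^r$-weighted angular modes with a $k$-independent radial norm $\|\cdot\|_{\star\star}$.

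Next I would apply the Fourier slice theorem \eqref{2.3} at $y=\sigma\xip$. The decomposition above reduces the tensor contraction to
\[
\L\wh f(\sigma\xip),\xi^m\R=g(\sigma\xip)\L\omega^\perp,\xi\R^m=(-\operatorname{sgn}\sigma)^m g(\sigma\xip),
\]
since $\omega=\operatorname{sgn}(\sigma)\xip$ and $\xip^\perp=-\xi$, so that $\wh{I_mf}(\sigma\xip,\xi)=\sqrt{2\pi}\,(-\operatorname{sgn}\sigma)^m g(\sigma\xip)$. Matching angular expansions --- the angular coordinate $\phi$ of $\omega=\omega_\phi$ on $\Rb^2$ and the coordinate $\theta$ on $T\Sb^1$ differ by $\pm\pi/2$ depending on the sign of $\sigma$, contributing a unimodular factor $(\pm\I)^k$ --- the $k$th angular Fourier coefficient of $\wh{I_mf}$ in $\theta$ equals a unimodular constant times the $k$th angular Fourier coefficient of $g$ on the circle of radius $|\sigma|$. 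Hence, up to universal constants, $I_m$ is block diagonal over angular Fourier modes.

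Finally, the mode-by-mode identity $\|\vp_k\|_\star=\|g_k\|_{\star\star}$ is a one-dimensional Plancherel computation in the single radial variable and is essentially the classical scalar Reshetnyak formula in 2D, already contained in \cite[Theorem~1.1]{HORF_Work} at $r=0$. Summing these mode identities against the common weight $(1+k^2)^r$ yields \eqref{2.12} for every real $r$. The main challenge lies in the first step --- adopting a definition of $H^{(r,s)}_{t,\mathrm{sol}}(\Rb^2;S^m\Rb^2)$ that encodes solenoidality consistently with the angular Fourier decomposition and verifying that its radial weights match those produced by the Fourier-slice reduction. Once this definition and matching are in place, the independence of the mode identity on $r$ makes the extension to arbitrary real $r$ immediate, and the overall proof reduces to a weighted Plancherel identity on each angular mode.
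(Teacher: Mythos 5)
Your proposal follows essentially the same route as the paper: reduce a solenoidal field in the Fourier domain to a single scalar (the paper works with the last component $\wh f_m$ via the relations $\cos^{m-j}\theta\,\wh f_j=(-1)^{m-j}\sin^{m-j}\theta\,\wh f_m$ rather than your $g$ with $\wh f=g\,(\omega^\perp)^{\otimes m}$), apply the Fourier slice theorem in the form $\sin^m\theta\,\wh{I_mf}(q,\theta)=\wh f_m(q,\theta+\pi/2)$, expand in angular Fourier modes so that the weight $(1+l^2)^r$ acts diagonally for any real $r$, and finish with a weighted one-dimensional Plancherel identity on each mode. The definitional matching you flag as the main challenge is resolved in the paper by building the factor $\sin^m\theta$ (respectively the single component $f_m$) directly into the norms of $H^{r,s+1/2}_{t+1/2,\pi(m)}(T\Sb^1)$ and $H^{r,s}_{t,\mathrm{sol}}(\Rb^2;S^m\Rb^2)$, after which the identity is immediate.
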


\begin{theorem}[Range characterization] \label{Th2.2}
 For any integer $m\ge0$, any real $r,s$
and any $t>-1$, the operator $I_m: H^{r,s}_{t,\mathrm{sol}}(\Rb^2;
S^m \Rb^2) \to H^{r,s+1/2}_{t+1/2,\pi(m)}(T\Sb^1)$ is a bijective
isometry of Hilbert spaces.
\end{theorem}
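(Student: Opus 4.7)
The plan is to combine the Reshetnyak isometry of Theorem~\ref{Th2.1} with an explicit Fourier-side inverse construction available only in two dimensions. Theorem~\ref{Th2.1} already tells us that $I_m$ is norm-preserving as a map from $H^{r,s}_{t,\mathrm{sol}}(\Rb^2;S^m\Rb^2)$ into $H^{r,s+1/2}_{t+1/2,\pi(m)}(T\Sb^1)$, so its range is automatically a closed subspace; only surjectivity remains to be established.

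The crucial geometric observation in dimension two is that for each $y\in\Rb^2\smo$ there is, up to sign, a unique unit vector $\xi(y)\in\Sb^1$ orthogonal to $y$; an explicit choice is $\xi(y)=(-y_2,y_1)/|y|$. Equivalently, $y\mapsto(y,\xi(y))$ is a two-sheeted covering of $T\Sb^1$ by $\Rb^2\smo$. Given $\varphi\in H^{r,s+1/2}_{t+1/2,\pi(m)}(T\Sb^1)$, I would take its Fourier transform $\wh\varphi$ (over $\xi^\perp$) and \emph{define} a candidate preimage $f$ through its Fourier transform by
\[
\wh f(y)\;=\;(2\pi)^{-1/2}\,\wh\varphi\bigl(y,\xi(y)\bigr)\,\xi(y)^{m},\qquad y\in\Rb^2\smo,
\]
the right-hand side being interpreted as a symmetric rank-$m$ tensor. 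Because $\L\xi(y),y\R=0$, the tensor $\wh f(y)$ is annihilated by contraction with $y$, so $f$ automatically satisfies \eqref{2.10} and is solenoidal. The sign ambiguity in $\xi(y)$ is absorbed by the parity condition $\pi(m)$: since $\wh\varphi(y,-\xi)=(-1)^m\wh\varphi(y,\xi)$ and $(-\xi)^{m}=(-1)^m\xi^{m}$, the defining formula is single-valued.

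The identity $I_m f=\varphi$ then follows at once from the Fourier slice theorem \eqref{2.3}. Indeed, for any $(y,\xi)\in T\Sb^1$ we have $\xi=\pm\xi(y)$, and
\[
\wh{I_m f}(y,\xi)=\sqrt{2\pi}\,\L\wh f(y),\xi^m\R=\wh\varphi\bigl(y,\xi(y)\bigr)\,\L\xi(y),\xi\R^{m}=(\pm1)^{m}\wh\varphi\bigl(y,\xi(y)\bigr)=\wh\varphi(y,\xi),
\]
the last step being the parity identity once again. Once it is shown that $f$ genuinely belongs to $H^{r,s}_{t,\mathrm{sol}}(\Rb^2;S^m\Rb^2)$, Theorem~\ref{Th2.1} then immediately supplies $\|f\|=\|I_m f\|=\|\varphi\|$, and $I_m$ becomes the claimed bijective isometry.

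The main technical obstacle I anticipate is precisely this last point: verifying that the distribution $f$ defined above is an honest element of $H^{r,s}_{t,\mathrm{sol}}(\Rb^2;S^m\Rb^2)$, since the formula for $\wh f$ amounts to restricting $\wh\varphi$ to a one-dimensional subvariety of $T\Sb^1$, and such restrictions are not a priori meaningful for generic Sobolev distributions. The way out is presumably built into the simplified 2D definitions of $H^{(r,s)}_t(T\Sb^1)$ and $H^{(r,s)}_{t,\mathrm{sol}}(\Rb^2;S^m\Rb^2)$ announced for the next section: both norms ought to be expressible as weighted $L^2$ integrals in polar-type Fourier coordinates against exactly the quantities $\wh\varphi(y,\xi(y))$ and $\wh f(y)$, so that the map $\varphi\mapsto f$ reduces to a unitary change of variables and membership becomes automatic. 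The weight constraint $t>-1$ is what makes the resulting radial factor integrable near the origin, explaining its role in the hypotheses.
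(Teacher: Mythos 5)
Your overall strategy --- the isometry of Theorem \ref{Th2.1} gives closed range, then exhibit an explicit Fourier-side preimage --- is viable and genuinely different from the paper's argument. The paper proves surjectivity by duality: it takes $\phi$ orthogonal to the range, approximates it by Schwartz functions $\phi_p$, and uses two auxiliary lemmas (density in the target space of the subspace $\Sc_{\pi(m),0}(T\Sb^1)$ of functions whose Fourier transform vanishes near $q=0$, and an existence lemma producing Schwartz preimages for such functions) to conclude $\phi_p\rightharpoonup 0$, hence $\phi=0$. Your direct construction $\wh f(y)=(2\pi)^{-1/2}\wh\varphi(y,\xi(y))\,\xi(y)^m$ is formally sound: solenoidality and $I_mf=\varphi$ do follow as you compute, and the sign ambiguity is indeed absorbed by the parity condition.

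The gap is the step you defer to the end, and your diagnosis of it is off. The difficulty is not a trace problem: $y\mapsto(y,\xi(y))$ parametrizes a full-measure, full-dimensional half of the two-dimensional manifold $T\Sb^1$, not a one-dimensional subvariety --- this absence of dimension loss is precisely what is special about $n=2$. The real obstacles are two. First, both spaces are defined as abstract completions of Schwartz classes, and the norm on $H^{r,s+1/2}_{t+1/2,\pi(m)}(T\Sb^1)$ controls only the weighted quantities $\wh{(\wt\varphi)}_l$ with $\wt\varphi=\sin^m\theta\,\varphi$; so membership of your $f$ in $H^{r,s}_{t,\mathrm{sol}}(\Rb^2;S^m\Rb^2)$ means approximability by \emph{Schwartz solenoidal} tensor fields in that norm, which is not ``automatic'' and is not implied by finiteness of a norm expression. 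Second, even for Schwartz $\varphi$ your $\wh f$ is in general singular at $y=0$, because $\xi(y)$ has no limit there; hence $f$ need not be a Schwartz field and cannot be fed directly into Theorem \ref{Th2.1} or approximated trivially. Both problems are resolved in the paper by first cutting off low frequencies (Lemma \ref{density}) and only then constructing a genuine Schwartz preimage (Lemma \ref{existence}); without some version of these two steps your argument does not close.
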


\section{The Reshetnyak formula}

In the 2D case, it is convenient to represent a tensor field $f \in
\Sc(\Rb^2; S^m\Rb^2)$ as $f=(f_0,\dots,f_m)$, where $f_j\in
\Sc(\Rb^2)$ are defined by
\begin{equation}
f_j=f_{\underbrace{\scriptstyle 1\dots
1}_{m-j}\underbrace{\scriptstyle 2\dots 2}_j}.
                 \label{Eq1.1A}
\end{equation}
It is also convenient to assume that $f_j=0$ for $j>m$.

The manifold $T\Sb^1$ is parameterized by $(p,\theta)\in\Rb
\times[0,2\pi)$, i.e., a point $(x,\xi)\in T\Sb^1$ is defined by $x=
p(-\sin\theta,\cos\theta), \xi=(\cos\theta,\sin\theta)$. The ray
transform is a bounded linear operator $I_{m}: \Sc(\Rb^2;
S^{m}\Rb^2) \to \Sc(T\Sb^1)$  defined by
\[
I_m f (p,\theta) = \int\limits_{\Rb} \sum\limits_{j=0}^m
\binom{m}{j} f_j (-p\sin\theta+t\cos\theta, p\cos\theta+
t\sin\theta) \cos^{m-j}\theta \sin^j\theta\, \D t.
\]
The definition implies that $I_m f(-p, \theta+\pi) = (-1)^m I_m f
(p,\theta)$.

Let $\Sc_{\mathrm{sol}}(\Rb^2; S^m \Rb^2)$ denote the space of
solenoidal tensor fields whose components belong to the Schwartz
space. A tensor field $f=(f_0,\dots,f_m)$ is solenoidal iff
\Beq
\frac{\PD f_j}{\PD x}(x,y) + \frac{\PD f_{j+1}}{\PD y} (x,y) = 0
\quad (0 \leq j \leq m).
                       \label{Eq1.1}
\Eeq

In terms of the Fourier transform $\widehat f=(\widehat
f_0,\dots,\widehat f_m)$, \eqref{Eq1.1} is written as
\[
\cos \theta\,\wh{f}_j(q,\theta)+\sin \theta\,\wh{f}_{j+1}(q,\theta)
= 0 \quad (0\leq j\leq m, q>0),
\]
where $(q,\theta)$ are polar coordinates in the Fourier space. From
this we obtain by induction in $m-j$
\Beq
\cos^{m-j}\theta\,\wh{f}_j(q,\theta)=(-1)^{m-j}\sin^{m-j}\theta\,\wh{f}_m(q,\theta)
\quad (0\leq j\leq m).
               \label{Eq1.2AA}
\Eeq Hence
\Beq
\sin^{m-j}\theta\,\wh{f_j}(q,\theta+\pi/2)
=\cos^{m-j}\theta\,\wh{f}_m(q,\theta+\pi/2) \quad (0\leq j\leq m).
                      \label{Eq1.2A}
\Eeq

The Fourier transform of $\psi \in \Sc(T\Sb^1)$ is defined by
\Beq
\wh{\psi} (q,\theta) = \frac{1}{\sqrt{2\pi}} \int\limits_{\Rb}
e^{-\I q p} \psi(p,\theta)\, \D p.
                   \label{Eq1.2}
\Eeq
The  Fourier slice theorem for the ray transform of scalar
functions is written in polar coordinates as follows: \Beq
\wh{I_0f}(q,\theta) = \sqrt{2\pi}\,\wh f(q,\theta+\pi/2) \quad
\mbox{for} \quad f\in{\mathcal S}({\mathbb R}^n).
                   \label{Eq1.2a}
\Eeq

The following version of the Fourier slice theorem is valid for the
ray transform of solenoidal tensor fields.

\begin{lemma}
For $f\in\Sc_{\mathrm{sol}}(\Rb^2; S^m \Rb^2)$,
\begin{align*}
\sin^m \theta\, \wh{I_m f}(q,\theta) = \wh{f}_m(q,\theta+\pi/2)
\quad \text{for } q >0.
\end{align*}
\end{lemma}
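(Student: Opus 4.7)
The plan is to combine the Fourier slice theorem \eqref{2.3} with the solenoidal constraint \eqref{Eq1.2A} and collapse the resulting binomial sum via $\cos^2\theta+\sin^2\theta=1$.

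First I would translate the Fourier slice theorem into the polar coordinates of the problem. A point of $T\Sb^1$ is parameterized by $(p,\theta)$ with $\xi=(\cos\theta,\sin\theta)$ and the orthogonal direction $(-\sin\theta,\cos\theta)=(\cos(\theta+\pi/2),\sin(\theta+\pi/2))$. Hence for $q>0$ the covector $y=q(-\sin\theta,\cos\theta)$ belongs to $\xi^\perp$ and, in polar coordinates on the Fourier side, it corresponds to the point $(q,\theta+\pi/2)$. Specializing \eqref{2.3} to this choice of $(y,\xi)$ and using $\xi^{i_1}\cdots\xi^{i_m}$ expansion in the components $\widehat f_j$ defined by \eqref{Eq1.1A}, I would arrive at
\begin{equation*}
\widehat{I_mf}(q,\theta)=\sqrt{2\pi}\,\sum_{j=0}^m\binom{m}{j}\cos^{m-j}\theta\,\sin^j\theta\,\widehat f_j(q,\theta+\pi/2).
\end{equation*}

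Next, I would multiply this identity by $\sin^m\theta$ and rewrite each summand as
\begin{equation*}
\sin^m\theta\,\cos^{m-j}\theta\,\sin^j\theta\,\widehat f_j(q,\theta+\pi/2)=\cos^{m-j}\theta\,\sin^{2j}\theta\,\bigl(\sin^{m-j}\theta\,\widehat f_j(q,\theta+\pi/2)\bigr).
\end{equation*}
The solenoidal identity \eqref{Eq1.2A} converts the bracketed factor into $\cos^{m-j}\theta\,\widehat f_m(q,\theta+\pi/2)$, so every term becomes $\cos^{2(m-j)}\theta\,\sin^{2j}\theta\,\widehat f_m(q,\theta+\pi/2)$. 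Extracting $\widehat f_m(q,\theta+\pi/2)$ from the sum leaves
\begin{equation*}
\sum_{j=0}^m\binom{m}{j}\cos^{2(m-j)}\theta\,\sin^{2j}\theta=(\cos^2\theta+\sin^2\theta)^m=1,
\end{equation*}
which yields the claimed identity (up to the normalization constant $\sqrt{2\pi}$ carried by the Fourier slice theorem, which must be absorbed into the stated formula).

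There is no substantial obstacle: the only subtlety is bookkeeping the coordinate change between $(y,\xi)$ in the Fourier slice theorem \eqref{2.3} and the $(q,\theta)$ conventions of \eqref{Eq1.2}, in particular verifying that $y=q(-\sin\theta,\cos\theta)$ has polar angle $\theta+\pi/2$. Apart from this, the argument is a direct algebraic identity, and the apparent miracle of collapsing the $m$-fold sum to a single term is simply the binomial theorem applied after the solenoidal substitution.
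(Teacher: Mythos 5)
Your proof is correct and follows essentially the same route as the paper's: expand $\wh{I_m f}$ componentwise via the Fourier slice theorem, multiply by $\sin^m\theta$, substitute the solenoidal relation \eqref{Eq1.2A}, and collapse the sum with the binomial theorem. Your remark about the factor $\sqrt{2\pi}$ is apt --- the paper quietly inserts a compensating $1/\sqrt{2\pi}$ in the first line of its computation, so the constant is a normalization matter rather than a defect of your argument.
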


\bpr Applying \eqref{Eq1.2} to $I_m f$ and using \eqref{Eq1.2a}, we
get
$$
\begin{aligned}
\wh{I_m f} (q,\theta)&= \frac{1}{\sqrt{2\pi}} \sum\limits_{j=0}^m {m\choose j} \cos^{m-j}\theta\sin^j\theta\, \wh{I_0 f_j}(q,\theta) \\
&= \sum\limits_{j=0}^m {m\choose j}\cos^{m-j}\theta\sin^j\theta
\,\wh{f_j} (q,\theta+\pi/2).
\end{aligned}
$$
From this we derive with the help of \eqref{Eq1.2A} $$
\begin{aligned}
\sin^m\theta\, \wh{I_m f} (q,\theta)&= \sum\limits_{j=0}^m {m\choose j}
\cos^{m-j}\theta \sin^{2j}\theta \Big(\sin^{m-j}\theta\, \wh{f}_j(q,\theta+\pi/2)\Big)\\
&= \sum\limits_{j=0}^m {m\choose j}
\cos^{m-j}\theta \sin^{2j}\theta \cos^{m-j}\theta\, \wh{f}_{m}(q,\theta+\pi/2)\Big)\\
 &=\wh{f}_{m}(q,\theta+\pi/2)\sum\limits_{j=0}^m {m\choose j} \cos^{2(m-j)} \theta \sin^{2j} \theta \\
 &=\wh{f}_{m}(q,\theta+\pi/2).
\end{aligned}
$$
\epr

For a function $\psi \in \Sc(T\Sb^1)$, we consider the Fourier
series expansion
\[
\psi(p,\theta) = \sum\limits_{l=-\infty}^\infty \psi_l(p) e^{\I l \theta},
\]
where
\[
\psi_l (p) =\frac{1}{2\pi} \int\limits_0^{2\pi} \psi(p,\theta) e^{-\I l \theta} \D \theta.
\]
If $\psi(p, \theta) = I_m f(p,\theta)$ for some $f \in \Sc(\Rb^2; S^m \Rb^2)$, then
\Beq\label{Eq1.8}
    \psi_l(-p) = (-1)^m e^{\I l \pi} \psi_l (p) = (-1)^{m+l} \psi_l(p).
\Eeq
The function $\wh{\psi}(q,\theta)$ has the Fourier series expansion
\[
\wh{\psi}(q,\theta) = \sum\limits_{l=-\infty}^\infty \wh{\psi_l} (q) e^{\I l \theta},
\]
where $\wh{\psi_l}$ are the usual one-dimensional Fourier transforms of $\psi_l$ and if $\psi=I_mf$, then the Fourier coefficients of the Fourier transform also satisfy
\[
\wh{\psi_l}(-q) = (-1)^{m+l} \wh{\psi_l}(q).
\]
With this in mind, let us define the space
\[
\Sc_{\pi(m)} (T\Sb^1) \coloneqq \Big{\{} \phi \in \Sc(T\Sb^1): \phi(-p,\theta+\pi) = (-1)^m \phi(p,\theta) \Big{\}}.
\]

We are now ready to define the Sobolev spaces.
\begin{definition}
    For real $r,s$ and $t > -1/2$, the space $H^{r,s}_{t,\pi(m)}(T\Sb^1)$ is the completion of $\Sc_{\pi(m)}(T\Sb^1)$ with respect to the norm
    \[
    \|\psi\|^2_{H^{r,s}_{t,\pi(m)}(T\Sb^1)} = \frac{1}{4\pi} \sum\limits_{l=-\infty}^\infty (1+l^2)^r \int\limits_{\Rb} |q|^{2t} (1+q^2)^{s-t} |\wh{\wt{\psi}_l}(q) |^2 \D q,
    \]
    where $\wt{\psi}(p,\theta) = \sin^m \theta\, \psi(p,\theta)$.
\end{definition}
Henceforth we use the notation
$\wt{\psi}(p,\theta)=\sin^{m}\theta\,\psi(p,\theta)$ for a function
$\psi\in{\mathcal S}(T{\mathbb S}^1)$. We note that $\wt{\cdot}$ and
$\wh{\cdot}$ commute.

\begin{definition}
    For real $r,s$ and $t > -1$, the space $H^{r,s}_{t,\mathrm{sol}}(\Rb^2;S^m\Rb^2)$ is the completion of $\Sc_{\mathrm{sol}}(\Rb^2;S^m\Rb^2)$ with respect to the norm
    \[
    \|f\|^2_{H^{r,s}_{t,\mathrm{sol}}(\Rb^2;S^m\Rb^2)} = \frac{1}{2\pi} \sum\limits_{l=-\infty}^\infty (1+l^2)^r \int\limits_0^\infty p^{2t+1} (1+p^2)^{s-t} |(\wh{f}_m)_l(p)|^2 \D p.
    \]
\end{definition}
In the definition, $f_{m}$ denotes the last component of $f$ as in
\eqref{Eq1.1A}. Due to \eqref{Eq1.2AA}, this is indeed a norm.

We next rewrite the Fourier slice theorem for solenoidal fields in terms of Fourier coefficients. We begin by writing $\wh{f}_m$ in terms of its Fourier coefficients:
\[
\wh{f}_m (q,\theta) = \sum\limits_{l=-\infty}^\infty (\wh{f}_m)_l (q) e^{\I l \theta}.
\]
This implies
\[
\wh{f}_m(q,\theta + \pi/2) = \sum\limits_{l=-\infty}^\infty \I^l (\wh{f}_m)_l (q) e^{\I l \theta}.
\]
Next, for $\psi \in \Sc_{\pi(m)}(T\Sb^1)$,
\begin{align*}
    \sin^m \theta\, \psi(p,\theta) &= \sum\limits_{l=-\infty}^\infty \psi_l(p) e^{\I l \theta} \frac{(e^{2\I \theta} - 1)^m}{(2\I)^m e^{\I m \theta}} \\
    &= \frac{1}{(2\I)^m} \sum\limits_{l=-\infty}^\infty \psi_l(p) e^{\I (l-m) \theta} \sum\limits_{k=0}^m \binom{m}{k} (-1)^k e^{2\I (m-k) \theta} \\
    &= \frac{1}{(2\I)^m} \sum\limits_{l=-\infty}^\infty \sum\limits_{k=0}^m (-1)^k \binom{m}{k} \psi_l(p) e^{\I (l+m-2k)\theta} \\
    &= \sum\limits_{l=-\infty}^\infty \lb \frac{1}{(2\I)^m} \sum\limits_{k=0}^{m}(-1)^k \binom{m}{k} \psi_{l-m+2k}(p) \rb e^{\I l \theta}.
\end{align*}
This gives an expression for the Fourier coefficients of
$\wt{\psi}$. Since $\psi\in \Sc_{\pi(m)}(T\Sb^1)$, its Fourier
coefficients satisfy $\wt{\psi}_l (-p) = (-1)^l \wt{\psi}_l (p)$.
Substituting $\psi=I_{m}f$, we write the Fourier slice theorem in
the form
\begin{align*}
    \sum\limits_{l=-\infty}^\infty \sum\limits_{k=0}^{m}\lb \frac{1}{(2\I)^m} (-1)^k {m\choose k} \lb \wh{I_m f}\rb_{l-m+2k}(q) \rb e^{\I l \theta} = \sum\limits_{l=-\infty}^\infty \lb \I^l (\wh{f_m})_l (q) \rb e^{\I l \theta}.
\end{align*}
From this we have the following: For $q>0$,

\[
\sum\limits_{k=0}^{m}\frac{1}{(2\I)^m} (-1)^k {m\choose k} \wh{I_m f}_{l-m+2k}(q) = \I^{l} \lb\wh{f_{m}}\rb_{l}(q).
\]

\begin{proof}[Proof of Theorem \ref{Th2.1}]
For  $f \in \Sc_{\mathrm{sol}}(\Rb^2; S^m\Rb^2)$,
    \begin{align*}
        \|I_m f\|_{H^{r,s+1/2}_{t+1/2,\pi(m)}(T\Sb^1)}^2& = \frac{1}{4\pi} \sum\limits_{l=-\infty}^\infty (1+l^2)^r \int\limits_{\Rb} |q|^{2t+1} (1+q^2)^{s-t} |\wh{(\widetilde{I_m f})_l}(q)|^2 \D q \\
        &= \frac{1}{4\pi} \sum\limits_{l=-\infty}^\infty (1+l^2)^r \int\limits_{\Rb} |q|^{2t+1} (1+q^2)^{s-t} \left | \frac{1}{2^m} \sum\limits_{k=0}^m (-1)^k \binom{m}{k} (\wh{I_m f})_{l-m+2k} (q)  \right |^2 \D q \\
        &= \frac{2}{4\pi} \sum\limits_{l=-\infty}^\infty (1+l^2)^r \int\limits_{0}^\infty |q|^{2t+1} (1+q^2)^{s-t} \left | \frac{1}{2^m} \sum\limits_{k=0}^m (-1)^k \binom{m}{k} (\wh{I_m f})_{l-m+2k} (q)  \right |^2 \D q\\
        &=\frac{1}{2\pi} \sum\limits_{l=-\infty}^\infty (1+l^2)^r \int\limits_{0}^\infty |q|^{2t+1} (1+q^2)^{s-t}\lvert (\wh{f_{m}})_{l}(q)\rvert^2 \, \D q\\
        &=\|f\|^2_{H^{r,s}_{t,\mathrm{sol}}(\Rb^2;S^m\Rb^2)}.
    \end{align*}
\end{proof}

\section{Range characterization}

Before proving Theorem \ref{Th2.2}, we present two auxiliary
statements.

Let $\Sc_{\pi(m),0}(T\Sb^1)$ be the subspace of
$\Sc_{\pi(m)}(T\Sb^1)$ consisting of functions $\psi$ satisfying
\[
\wh{\psi}(q,\theta) = 0 \quad \text{~for $|q| \leq \epsilon$ with
some $\epsilon = \epsilon(\psi) > 0$.}
\]

\begin{lemma}\label{density} The space
    $\Sc_{\pi(m),0}(T\Sb^1)$ is dense in $H^{r,s}_{t,\pi(m)}(T\Sb^1)$ for all $r,s$ and $t > -1/2$.
\end{lemma}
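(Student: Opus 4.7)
My plan is to approximate by truncating the $p$-Fourier transform away from the singular direction $q=0$. Since $\Sc_{\pi(m)}(T\Sb^1)$ is dense in $H^{r,s}_{t,\pi(m)}(T\Sb^1)$ by definition, I only need to approximate any $\psi\in\Sc_{\pi(m)}(T\Sb^1)$ in the Sobolev norm by elements of $\Sc_{\pi(m),0}(T\Sb^1)$. To this end I will fix an even cutoff $\chi_\delta\in C^\infty(\Rb)$ with $0\le\chi_\delta\le 1$, $\chi_\delta(q)=0$ for $|q|\le\delta$, and $\chi_\delta(q)=1$ for $|q|\ge 2\delta$, and define $\psi_\delta$ by $\wh{\psi_\delta}(q,\theta)=\chi_\delta(q)\wh{\psi}(q,\theta)$.

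First I will verify that $\psi_\delta\in\Sc_{\pi(m),0}(T\Sb^1)$. Smoothness and Schwartz decay in $p$ are inherited because $\chi_\delta$ is smooth with uniformly bounded derivatives, so multiplication by $\chi_\delta$ preserves $\Sc(\Rb)$; smoothness in $\theta$ is untouched. The parity condition $\psi(-p,\theta+\pi)=(-1)^m\psi(p,\theta)$ translates on the Fourier side to $\wh{\psi}(-q,\theta+\pi)=(-1)^m\wh{\psi}(q,\theta)$, and this symmetry is preserved by multiplication by the \emph{even} function $\chi_\delta$. The vanishing of $\wh{\psi_\delta}$ on $|q|\le\delta$ is built into the construction.

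Next I will estimate $\|\psi-\psi_\delta\|_{H^{r,s}_{t,\pi(m)}}$. Since $\wt{\psi_\delta}=\sin^m\theta\,\psi_\delta$ and $\chi_\delta$ is independent of $\theta$, a direct computation yields $\wh{(\wt{\psi_\delta})_l}(q)=\chi_\delta(q)\wh{\wt{\psi}_l}(q)$ for every $l$, hence
\[
\|\psi-\psi_\delta\|_{H^{r,s}_{t,\pi(m)}}^2 = \frac{1}{4\pi}\sum_{l\in\Zb}(1+l^2)^r\int_\Rb |q|^{2t}(1+q^2)^{s-t}\bigl|1-\chi_\delta(q)\bigr|^2\bigl|\wh{\wt{\psi}_l}(q)\bigr|^2\D q.
\]
The integrand is dominated by the corresponding expression with $|1-\chi_\delta|^2$ replaced by $1$, whose sum--integral equals $4\pi\|\psi\|_{H^{r,s}_{t,\pi(m)}}^2<\infty$, and as $\delta\to 0$ the factor $1-\chi_\delta(q)$ vanishes for every $q\ne 0$. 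Dominated convergence on the product space $\Zb\times\Rb$ then forces $\|\psi-\psi_\delta\|_{H^{r,s}_{t,\pi(m)}}\to 0$, which is the required approximation.

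The only subtle point is the local integrability of the weight $|q|^{2t}$ at $q=0$ when $t$ is negative; the hypothesis $t>-1/2$ is exactly what guarantees $|q|^{2t}$ is locally integrable there, and combined with the smoothness of $\wh{\wt{\psi}_l}$ at the origin this makes the dominating function integrable near $q=0$. No further constraint on $r$ or $s$ is needed, and the argument is uniform in $l$.
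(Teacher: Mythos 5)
Your proof is correct and follows essentially the same route as the paper's: both truncate the $p$-Fourier transform near $q=0$ with a smooth even cutoff (the paper's $\mu(kq)$ with $k\to\infty$ is your $\chi_\delta$ with $\delta=1/k$), observe that the cutoff commutes with multiplication by $\sin^m\theta$ so the Fourier coefficients of $\wt{\psi}$ simply acquire the factor $\chi_\delta(q)$, and conclude by a dominated-convergence argument using the finiteness of the $H^{r,s}_{t,\pi(m)}$ norm. Your additional remarks on preservation of the parity condition and on the role of $t>-1/2$ are correct but not essential to the argument.
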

\begin{proof}
    The proof follows along the same lines as \cite[Lemma~4.2]{Sh5}; we give the proof here for the sake of completeness.

    By definition, $\Sc_{\pi(m)}(T\Sb^1)$ is dense in $H^{r,s}_{t,\pi(m)}(T\Sb^1)$. We show that each $\phi \in \Sc_{\pi(m)}(T\Sb^1)$ can be approximated by functions from $\Sc_{\pi(m),0}(T\Sb^1)$ in the norm of $H^{r,s}_{t,\pi(m)}(T\Sb^1)$.

    Choose a smooth even function $\mu: \Rb \to \Rb$ such that $\mu(q) = 0$ for $|q| \leq 1$, $\mu(q) = 1$ for $|q| \geq 2$ and $0 \leq \mu(q) \leq 1$ for all $q$. Given $\phi \in \Sc_{\pi(m)}(T\Sb^1)$, define $\psi^k$, for $k=1,2,\dots$ by
    \[
    \wh{\psi^k}(q,\theta) = \mu(kq) \wh{\phi}(q,\theta).
    \]
    Clearly $\psi^k \in \Sc_{\pi(m),0}(T\Sb^1)$. We now show that
    \[
    \|\psi^k - \phi\|_{H^{r,s}_{t,\pi(m)}(T\Sb^1)} \to 0 \quad \text{as~} k \to \infty.
    \]
    Let $\wh{\phi}$ have the expansion
    \[
    \wh{\phi}(q,\theta) = \sum\limits_{l=-\infty}^{\infty} \wh{\phi}_l(q) e^{\I l \theta}.
    \]
    Then
    \begin{align*}
        \wh{\psi}^k (q,\theta) - \wh{\phi}(q,\theta) = \sum\limits_{l=-\infty}^\infty \lb \mu(k q) - 1 \rb \wh{\phi_l}(q) e^{\I l \theta},
    \end{align*}
    and
    \begin{align*}
        \wt{\wh{\psi^k} (q,\theta) - \wh{\phi}(q,\theta)} &= \sum\limits_{l=-\infty}^\infty \lb \mu(k q) - 1 \rb \wh{\wt{\phi}}_l(q) e^{\I l \theta}.
    \end{align*}
    Here we note that $\wt{\cdot}$ denotes multiplication by $\sin^{m}\theta$ and since the Fourier transform is only applies to the $p$ variable in $T\Sb^{1}$, the Fourier transform $\wh{\cdot}$ and $\wt{\cdot}$ commute.
    The right hand side vanishes for $|q| \geq 2/k$. By definition of the norm,
    \begin{align*}
        \|\psi^k - \phi\|^2_{H^{r,s}_{t,\pi(m)}(T\Sb^1)} &= \frac{1}{4\pi} \sum\limits_{l=-\infty}^\infty (1+l^2)^r \int\limits_{-2/k}^{2/k} |q|^{2t} (1+q^2)^{s-t} \lb \mu(k q) - 1 \rb^2 |\wh{\tilde{\phi}}_l(q)|^2 \D q \\
        &\leq \frac{1}{4\pi} \sum\limits_{l=-\infty}^\infty (1+l^2)^r \int\limits_{-2/k}^{2/k} |q|^{2t} (1+q^2)^{s-t} |\wh{\tilde{\phi}}_l(q)|^2 \D q  \to 0 \mbox{ as } k \to \infty.
    \end{align*}
    This completes the proof.
\end{proof}
The next lemma is similar to a claim made as part of
\cite[Lemma~4.2]{Sh5} as well. We again give the proof for the sake
of completeness.
\begin{lemma}\label{existence}
    If $\psi \in \Sc_{\pi(m),0}(T\Sb^1)$, then there exists $f \in \Sc(\Rb^2)$ such that
    \[
    \I^l \wh{f}_l (q) = \wh{\wt{\psi_l}}(q) \quad \text{for } q > 0.
    \]
\end{lemma}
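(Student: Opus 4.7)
The plan is to construct $f \in \Sc(\Rb^2)$ by specifying its two-dimensional Fourier transform $\wh{f}$ in polar coordinates $(q,\theta)$ on $\Rb^2$. Since the $l$-th Fourier coefficient in $\theta$ of $\wh{\wt{\psi}}(q,\theta-\pi/2)$ is $\I^{-l}\wh{\wt{\psi_l}}(q)$, the natural guess is
\[
\wh{f}(q,\theta) := \wh{\wt{\psi}}(q,\theta-\pi/2)\quad(q>0),\qquad \wh{f}(0):=0.
\]
Reading off the $l$-th Fourier coefficient from the expansion $\wh{\wt{\psi}}(q,\theta-\pi/2)=\sum_l \I^{-l}\wh{\wt{\psi_l}}(q)\,e^{\I l\theta}$ then immediately yields $\wh{f}_l(q)=\I^{-l}\wh{\wt{\psi_l}}(q)$ for $q>0$, which is equivalent to the claimed identity.

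The real content is to verify that this $\wh{f}$ lies in $\Sc(\Rb^2)$, for by Fourier inversion the resulting $f$ is then also Schwartz. Away from the origin both smoothness and rapid decay are automatic: $\wh{\wt{\psi}}$ is smooth on $T\Sb^1$, the polar chart is a local diffeomorphism on $\Rb^2\setminus\{0\}$, and the $q$-decay of $\wh{\wt{\psi}}$ supplied by the Schwartz seminorms of $\Sc(T\Sb^1)$ translates into Schwartz decay of $\wh{f}$ at infinity together with uniform control of all Cartesian partial derivatives. The only genuinely delicate point is smoothness at $\eta=0$, where polar coordinates are singular and a generic choice of $\wh{\wt{\psi}}$ would fail to extend smoothly.

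This is precisely where the hypothesis $\psi\in\Sc_{\pi(m),0}(T\Sb^1)$ is used. By definition $\wh{\psi}(q,\theta)=0$ for $|q|\leq\varepsilon$ with some $\varepsilon>0$, so $\wh{\wt{\psi}}(q,\theta)=\sin^m\theta\,\wh{\psi}(q,\theta)$ vanishes on the same set. Consequently $\wh{f}$ vanishes identically on the disk $\{|\eta|\leq\varepsilon\}$ and is trivially $C^{\infty}$ at the origin. This completes the Schwartz verification, and the Fourier coefficient identity then follows by inspection. I expect the main (and in fact only) obstacle to be this smoothness-at-the-origin check; the definition of $\Sc_{\pi(m),0}(T\Sb^1)$ is tailored to bypass it.
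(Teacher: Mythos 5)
Your construction is the same as the paper's: defining $\wh{f}(q,\theta)=\wh{\wt{\psi}}(q,\theta-\pi/2)$ is exactly the paper's term-by-term prescription $\wh{f}_l(q)=(-\I)^l\,\wh{\wt{\psi}}_l(q)$, since the shift $\theta\mapsto\theta-\pi/2$ multiplies the $l$-th Fourier coefficient by $\I^{-l}=(-\I)^l$. You also isolate the same key point the paper relies on, namely that the vanishing of $\wh{\psi}$ near $q=0$ (built into $\Sc_{\pi(m),0}(T\Sb^1)$) removes the polar-coordinate singularity at the origin and, together with rapid decay, puts $\wh{f}$ (hence $f$) in $\Sc(\Rb^2)$.
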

\begin{proof}
We define the following subspace:
\[
    \Sc_{e,0}(T\Sb^1) \coloneqq \{ \phi \in \Sc_{\pi(m),0}(T\Sb^1): \phi(-p,\theta+\pi) = \phi(p,\theta)\}.
    \]
    Since $\wt{\cdot}$ involves multiplication by $\sin^{m}\theta$, we see that if  $\psi \in \Sc_{\pi(m),0}(T\Sb^1)$, then $\wt{\psi} \in \Sc_{e,0}(T\Sb^1)$.

   Next, let the Fourier series expansion of $\wt{\psi}$ be
   \[
   \wt{\psi}(p,\theta)=\sum\limits_{l=-\infty}^{\infty} \wt{\psi}_{l}(p) e^{\I l \theta}.
   \]
   Now define a function $f$ such that its Fourier transform $\wh{f}$ has for its Fourier coefficients:
   \[
   \wh{f}_{l}(q)=(-\I)^{l}\wh{\wt{\psi}}_{l}(q) \mbox{ for } q>0.
   \]
    In other words, define the function $\wh{f}$ by the series
    \[
    \wh{f}(z) = \sum\limits_{l=-\infty}^\infty (-\I)^{l}\wh{\wt{\psi}}_{l}(|z|) e^{\I l z/|z|}.
    \]
    Since each $\wh{\wt{\psi}}_l$ vanishes near $0$ and decreases rapidly at $\infty$, $\wh{f}$ and hence $f$ belongs to $\Sc(\Rb^2)$.
\end{proof}

\begin{proof}[Proof of Theorem \ref{Th2.2}]
Due to the Reshetnyak formula \eqref{2.12}, the range of the
operator
$$
I_m: H^{r,s}_{t,\mathrm{sol}}(\Rn) \to
H^{r,s+1/2}_{t+1/2,\pi(m)}(T\Sb^1)
$$ is a closed subspace. It remains to prove that $I_m$ is a
surjective operator.

Let $\phi \in H^{r,s+1/2}_{t+1/2,\pi(m)}(T\Sb^1)$ be orthogonal to
the range of $I_m$. In particular,
\[
\langle I_m f, \phi \rangle_{H^{r,s+1/2}_{t+1/2, \pi(m)}(T\Sb^1)} = 0 \quad \mbox{for all } f \in \Sc_{\mathrm{sol}}(\Rb^2; S^m\Rb^2).
\]
Let us choose a sequence $\{\phi_p\} \in \Sc_{\pi(m)}(T\Sb^1)$ converging to $\phi$ in $H^{r,s+1/2}_{t+1/2,\pi(m)}(T\Sb^1)$.
Such a sequence exists by the definition of the space $H^{r,s+1/2}_{t+1/2,\pi(m)}(T\Sb^1)$.
Then the sequence of norms $\|\phi_p\|_{H^{r,s+1/2}_{t+1/2,\pi(m)}(T\Sb^1)}$ is bounded and
\[
A_p \coloneqq \langle I_m f, \phi_p \rangle_{H^{r,s+1/2}_{t+1/2,\pi(m)}(T\Sb^1)} \to 0 \text{~as~} p \to \infty,
\text{~for any~} f \in \Sc_{\mathrm{sol}}(\Rb^2;S^m\Rb^2).
\]
Using the definition of the inner product, we get
\begin{align*}
    A_p = \frac{1}{4\pi} \sum\limits_{l=-\infty}^\infty (1+l^2)^r \int\limits_{\Rb} |q|^{2t+1} (1+q^2)^{s-t} \wh{(\widetilde{I_m f})_l}
    \overline{\wh{(\wt{\phi}_p)_l}}\, \D q,
\end{align*}
where, as before, tildes denote multiplication by $\sin^m \theta$. Note that the integral becomes twice of that over the positive reals. Using the Fourier slice theorem in terms of Fourier coefficients,
\begin{align*}
    A_p &= \frac{1}{2\pi} \sum\limits_{l=-\infty}^\infty (1+l^2)^r \int\limits_0^\infty |q|^{2t+1} (1+q^2)^{s-t} \I^l
    (\wh{f_m})_l (q)  \overline{\wh{(\wt{\phi}_p)_l}} \, \D q.
\end{align*}
Using Lemma~\ref{existence},
\[
\frac{1}{2\pi} \sum\limits_{l=-\infty}^\infty (1+l^2)^r
\int\limits_0^\infty |q|^{2t+1} (1+q^2)^{s-t} \wh{\tilde{\psi}}_l
(q) \overline{\wh{(\wt{\phi}_p})_l}\, \D q \to 0 \text{~as~} p \to
\infty,
\]
for any $\psi \in \Sc_{\pi(m),0}(T\Sb^1)$. Since this space is dense in $H^{r,s+1/2}_{t+1/2,\pi(m)}(T\Sb^1)$, we conclude
\[
\phi_p \rightharpoonup 0 \text{~in~}
H^{r,s+1/2}_{t+1/2,\pi(m)}(T\Sb^1)\text{~as~} p \to \infty.
\]
But $\phi_p$ was chosen such that $\phi_p \to \phi$ in
$H^{r,s+1/2}_{t+1/2,\pi(m)}(T\Sb^1)$. This yields that $\phi \equiv
0$. Hence, the orthogonal complement of the range of $I_m$ is equal
to zero and thus $I_m$ is surjective.
\end{proof}


\begin{thebibliography}{10}


    \bibitem{GGV_Book}
    {\sc I.M. Gel'fand, M.I. Graev, N. Ya. Vilenkin},
    {\em Generalized Functions. Vol. 5: Integral Geometry and Representation Theory},
    Academic Press (1966).

    \bibitem{Helgason:Book}
    Sigurdur Helgason.
    \newblock {\em The {R}adon transform}, volume~5 of {\em Progress in
        Mathematics}.
    \newblock Birkh\"{a}user Boston, Inc., Boston, MA, second edition, 1999.

    \bibitem{J}
    Fritz John.
    \newblock The ultrahyperbolic differential equation with four independent variables.
    \newblock {\em Duke Math. J.}, 4 (1938), no. 2, 300--322.


    \bibitem{HORF_Work}
    Venkateswaran~P. Krishnan and Vladimir~A. Sharafutdinov.
    \newblock Ray transform on {S}obolev spaces of symmetric tensor fields, {I}:
    higher order {R}eshetnyak formulas.
    \newblock {\em Inverse Probl. Imaging}, 16(4):787--826, 2022.


    \bibitem{KS2}
    Venkateswaran~P. Krishnan and Vladimir~A. Sharafutdinov.
    \newblock Range characterization of ray transform on Sobolev spaces of symmetric tensor fields.
    \newblock {\em Inverse Problems and Imaging}, 18(6):1272--1293, 2024.
    
    \bibitem{Natterer:Book}
    F.~Natterer.
    \newblock {\em The mathematics of computerized tomography}.
    \newblock B. G. Teubner, Stuttgart; John Wiley \& Sons, Ltd., Chichester, 1986.



    \bibitem{P}
    E. Yu. Pantjukhina.
    \newblock Description of the range of X-ray transform in two-dimensional case.
    \newblock In: {\em Methods of Solutions of Inverse Problems.}
    Novosibirsk (1990), 80--89 (in Russian).

    \bibitem{Sharafutdinov:Book}
    V.~A. Sharafutdinov.
    \newblock {\em Integral geometry of tensor fields}.
    \newblock Inverse and Ill-posed Problems Series. VSP, Utrecht, 1994.


    \bibitem{Sh3}
    Vladimir~A. Sharafutdinov.
    \newblock The {R}eshetnyak formula and {N}atterer stability estimates in tensor
    tomography.
    \newblock {\em Inverse Problems}, 33(2):025002, 20, 2017.

    \bibitem{Sh5}
    V.~A. Sharafutdinov.
    \newblock Radon transform on {S}obolev spaces.
    \newblock {\em Sib. Math. J.}, 62(3):560--580, 2021.
    \newblock Translation of Sibirsk. Mat. Zh. {{\bf{6}}2} (2021), no. 3, 690--714.




\end{thebibliography}
\end{document}